%------------------------------------------------------------------------------
% Beginning of journal.tex
%------------------------------------------------------------------------------
%
% AMS-LaTeX version 2 sample file for journals, based on amsart.cls.
%
%        ***     DO NOT USE THIS FILE AS A STARTER.      ***
%        ***  USE THE JOURNAL-SPECIFIC *.TEMPLATE FILE.  ***
%
% Replace amsart by the documentclass for the target journal, e.g., tran-l.
%
\documentclass{amsart}

\newtheorem{theorem}{Theorem}[section]
\newtheorem{lemma}[theorem]{Lemma}

\theoremstyle{definition}
\newtheorem{defi}[theorem]{Definition}

\newtheorem{corollary}[theorem]{Corollary}

\theoremstyle{remark}

\newcommand{\Z}{\mathbb{Z}}
\newcommand{\C}{\mathbb{C}}
\newcommand{\p}{\mathfrak{P}}
\newcommand{\be}{\begin{equation}}
\newcommand{\ben}{\end{equation}}
\newcommand{\sni}{\underset{i=1}{\overset{n}{\sum}}}
\numberwithin{equation}{section}

%    Absolute value notation

%    Blank box placeholder for figures (to avoid requiring any
%    particular graphics capabilities for printing this document).

\begin{document}

\title{Stritly positive definite functions on compact abelian groups}

%    Information for first author
\author{Jan Emonds$^*$}
%    Address of record for the research reported here
\address{Institut f\"ur Mathematik, Universit\"at Paderborn, D-33098 Paderborn, Germany }
\email{emonds@math.upb.de}
%    Current address
%\curraddr{...}
%    \thanks will become a 1st page footnote.
\thanks{$^*$Supported by the DFH and the International Research Training Group DFG-1133 ''Geometry and Analysis of Symmetries''}

%    Information for second author
\author{Hartmut F\"{u}hr}
\address{Lehrstuhl A f\"ur Mathematik, RWTH Aachen, D-52056 Aachen, Germany}
\email{fuehr@matha.rwth-aachen.de}
%\thanks{Support information for the second author.}

%    General info
\subjclass[2000]{Primary 43A25; Secondary 43A75}

\date{\today}

%\dedicatory{This paper is dedicated to our advisors.}

\keywords{strictly positive definite functions; compact abelian groups; 
trigonometric polynomials; Fourier series}

\begin{abstract}
We study the Fourier characterisation of strictly positive definite functions on 
compact abelian groups. Our main result settles the case $G = F \times 
\mathbb{T}^r$, with $r \in \mathbb{N}$ and $F$ finite. The characterisation 
obtained for these groups does not extend to arbitrary compact abelian groups; 
it fails in particular for all torsion-free groups. 
\end{abstract}

\maketitle

\section{Introduction}
Let $G$ be a compact abelian group. A complex-valued function  $f$ on $G$ is 
called positive definite if for all $x_1,\ldots,x_n$ in $G$ 
and all $c_1,\ldots,c_n$ in $ \C\setminus\{0\}$ we have \be \underset{i,j=1}{\overset{n}{\sum}}c_i\overline{c_j}f(x_j^{-1}x_i)\geq 
0.\ben
If the inequality above becomes strict whenever the $x_i$ are distinct, we call $f$ strictly positive definite. 
By $ \p(G)$ we denote the set of continuous positive definite functions on 
$G$ and by $\p^+(G)$ the subset of strictly positive definite 
functions. 

Bochner's theorem \cite[Theorem 1.4.3]{R} provides a neat characterisation of 
the elements of $\p(G)$ via the Fourier transform: $f \in \p(G)$ iff 
$\widehat{f} \ge 0$. For $\p^+(G)$ however, no simple general characterisation 
is known. For the compact group $\mathbb{T}$ of complex numbers with modulus 
one, the question has been studied in \cite{ER,P,Su}, and solved in \cite{ER,P}. 
Partial results for general, not necessarily abelian, compact groups may be 
found in \cite{AP}.

%In this paper, we establish a complete characterisation of strictly positive 
%functions on groups of the type $G = F \times \mathbb{T}^r $, with $r \in 
%\mathbb{N}$ and $F$ a finite abelian group. We also present partial results for 
%other classes of groups, which provide an interesting contrast to the main 
%result. 

 It is an easy observation to make that in order to decide whether $f \in \p 
(G)$ is in fact in $\p^+ (G)$, only the support of 
$\widehat{f}$ needs to be known; see Theorem \ref{thm:spd_sets} for a precise statement. Thus, 
strict positive definiteness translates to a property of subsets of the dual 
group $\widehat{G}$, and we accordingly call $K \subset \widehat{G}$ {\bf 
strictly positive definite} if it is the support of $\widehat{f}$, for some $f 
\in \p^+ (G)$. The paper compares this notion to another property, called 
ubiquity: A subset $K \subset \widehat{G}$ is called {\bf ubiquitous} if for all 
$H < \widehat{G}$ of finite index and all $ \gamma \in \widehat{G}$, the 
intersection $\gamma H \cap K$ is nonempty. It is fairly easy to see that $K$ is 
ubiquitous if it is strictly positive definite; see Lemma \ref{lem:spd_ubiq} 
below. The chief result of our paper states that the converse is true for $G = F 
\times \mathbb{T}^r$:

\begin{theorem}
 \label{thm:main} Let $G =  F \times \mathbb{T}^r $, and let $K \subset 
\widehat{G}$. Then $K$ is strictly positive definite iff $K$ is ubiquitous. 
\end{theorem}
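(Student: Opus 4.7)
My plan is to prove the nontrivial direction ubiquitous $\Rightarrow$ SPD, since the converse is Lemma \ref{lem:spd_ubiq}. By Theorem \ref{thm:spd_sets}, SPD of $K$ is equivalent to the \emph{separation condition}: for every finite set of distinct $x_1,\dots,x_n \in G$ and every $(c_1,\dots,c_n) \in (\C\setminus\{0\})^n$, the function
\[
    P(\gamma) := \sum_{i=1}^n c_i \gamma(x_i), \qquad \gamma \in \widehat{G},
\]
does not vanish identically on $K$. Since $K$ is ubiquitous, it suffices to exhibit a single coset $\gamma_0 H$ of a finite-index subgroup $H \leq \widehat{G}$ on which $P$ is \emph{nowhere} zero; ubiquity then supplies a $\gamma \in K \cap \gamma_0 H$ with $P(\gamma) \neq 0$. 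The entire proof thus reduces to the claim that every nonzero finite sum of characters $P$ on $\widehat{G} = \widehat{F} \times \Z^r$ is nowhere vanishing on some coset of a finite-index subgroup.

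Writing $x_i = (f_i, t_i)$ and $\gamma = (\chi, m)$, and grouping by common $t$-coordinate,
\[
    P(\chi, m) = \sum_{t \in T^*} A_t(\chi)\, e^{2\pi i \langle m, t \rangle}, \qquad A_t(\chi) = \sum_{i\colon t_i = t} c_i \chi(f_i).
\]
Within each fibre $\{i : t_i = t\}$ the $f_i$'s are distinct, so by linear independence of characters on the finite group $\widehat{F}$ each $A_t$ is a nonzero function on $\widehat{F}$; their common zero sets cannot exhaust $\widehat{F}$ (otherwise $P \equiv 0$), so some $\chi_0 \in \widehat{F}$ has $A_t(\chi_0) \neq 0$ for at least one $t$. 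Distinct $t \in T^*$ yield linearly independent characters $m \mapsto e^{2\pi i \langle m, t\rangle}$ of $\Z^r$, hence $Q(m) := P(\chi_0, m)$ is a nonzero trigonometric sum on $\Z^r$. If one produces $m_0 \in \Z^r$ and a finite-index $L \leq \Z^r$ with $Q$ nowhere vanishing on $m_0 + L$, then $\{\chi_0\} \times (m_0 + L)$ is a coset of the finite-index subgroup $\{1_{\widehat{F}}\} \times L \leq \widehat{G}$ on which $P$ vanishes nowhere.

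The remaining pure-torus problem is this: given $Q(m) = \sum_{j=1}^\ell b_j e^{2\pi i \langle m, s_j\rangle}$ with distinct $s_j \in \T^r$, $b_j \in \C^*$, and $Q \not\equiv 0$, find a coset $m_0 + L$ on which $Q$ is nowhere zero. I would pass to the closure $T := \overline{\phi(\Z^r)}$ of the image of $\phi : \Z^r \to \T^\ell$, $m \mapsto (e^{2\pi i \langle m, s_j\rangle})_j$, write $Q = F \circ \phi$ with $F(z) = \sum_j b_j z_j$, and observe that because distinct $s_j$ give pairwise distinct characters $z_j|_T$ of $T$, linear independence of characters forces $F|_T \not\equiv 0$, so the zero set $Z_F \subset T$ is a proper closed (algebraic) subvariety. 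Using the decomposition $T \cong \T^s \times T'$ with $T'$ finite, I split by $s$. When $s = 0$ the $s_j$ are all rational with some common denominator $N$, $Q$ is $N\Z^r$-periodic, and a character-sum argument on $\Z^r/N\Z^r$ directly yields a residue class on which $Q$ is constantly nonzero. When $s \geq 1$ one chooses $L$ so that $\overline{\phi(L)}$ is the smallest possible finite-index closed subgroup $\T^s \times T''$ of $T$, making $\phi(m_0)\cdot\overline{\phi(L)}$ a single connected component $C$ of $T$, and then shifts $m_0 \in \Z^r$ so that the countable dense orbit $\phi(m_0)\phi(L)$ inside $C$ avoids $Z_F \cap C$.

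The main obstacle is precisely this last step: translating the countable dense orbit $\phi(L)$ by a suitable $\phi(m_0)$ so as to miss the proper closed subvariety $Z_F$ entirely. This is a Diophantine assertion about the fibres of $\phi$ over $Z_F$, and its success hinges on the abundance of proper finite-index subgroups in $\widehat{G} = \widehat{F} \times \Z^r$, which is supplied by the free factor $\Z^r$. For torsion-free compact abelian groups outside the class $F \times \T^r$, the discrete dual is divisible and admits no proper finite-index subgroups at all; ubiquity then degenerates into the trivial condition $K \neq \emptyset$, so the characterisation must fail there, consistently with the paper's remark.
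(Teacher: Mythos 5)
Your overall strategy is sound, and the reduction from $\widehat{G}=\widehat{F}\times\Z^r$ to the pure torus dual $\Z^r$ is correct — in fact it is cleaner and more direct than the paper's route through the general product result (Theorem \ref{thm:spd_dirprod}). Fixing $\chi_0\in\widehat{F}$ so that $Q(m):=P(\chi_0,m)$ is a nonzero exponential sum on $\Z^r$, and then looking for a finite-index coset of the form $\{\chi_0\}\times(m_0+L)$, does exactly what is needed provided you can handle the torus step. The framing ``ubiquity forces SPD as soon as every nonzero trigonometric polynomial is nowhere zero on some coset of a finite-index subgroup'' is also the same logical scaffolding the paper uses, just presented contrapositively.

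The gap is precisely where you say it is, and it is not a technicality. You need to show that the nonzero exponential sum $Q$ on $\Z^r$ avoids zero on some coset $m_0+L$ of finite index. Writing $Q=F\circ\phi$ with $\phi:\Z^r\to\T^\ell$ a monomial map, observing that $Z_F\cap C$ is a proper closed subset of the relevant component $C$, and then hoping to translate the countable dense orbit $\phi(m_0+L)$ off $Z_F$ has no soft justification: a dense countable set in $C$ can certainly meet a proper closed (even a proper algebraic) subset, and it is entirely conceivable that for \emph{every} choice of $m_0$ and finite-index $L$ the orbit still lands in $Z_F$ somewhere. Deciding which integer points $\phi$ sends into $Z_F$ is a genuine Diophantine question, and it is exactly what the paper's Corollary \ref{cor:laurent} — a consequence of Laurent's theorem, a Skolem--Mahler--Lech type result whose known proofs rely on the subspace theorem — resolves: $Null(Q)\subset\Z^r$ is a finite union of cosets $\gamma_i+H_i$ of subgroups. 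Given that structure, constructing a finite-index coset disjoint from all the $\gamma_i+H_i$ becomes elementary group theory (the paper's Lemmas \ref{lem:index_fi} and \ref{lem:fi_supergrp}), which is exactly the $m_0+L$ you are missing. Without invoking Laurent's theorem or something of comparable depth, the argument does not close, and there is no reason to expect a purely topological/dynamical substitute.
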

 
The case $d=1$, $F$ trivial was settled in \cite{ER,P}. The paper \cite{Su} 
established partial results, apparently unaware of the previous source. While 
strictly speaking the results of \cite{Su} are contained in the earlier paper, 
we have found \cite{Su} to be a useful source of ideas; in particular the notion 
of ubiquity is taken from that paper. 

The paper proceeds as follows: Section \ref{sect:prelim} collects general 
remarks and definitions relating to strict positive definiteness. We observe 
that if $\p^+(G)$ is nonempty, then $G$ is metrisable (Corollary 
\ref{cor:spd_metrisable}). We then prove the implication ``strict positive 
definite $\Rightarrow$ ubiquitous'' (Lemma \ref{lem:spd_ubiq}). A closer look at 
the torsion subgroup of $G$ allows to determine interesting classes of examples:  
The converse of \ref{lem:spd_ubiq} is true for all torsion groups, and fails for 
all torsion-free groups. In the final section we focus on the proof of Theorem 
\ref{thm:main}. 

\section{Preliminaries and generalities}
\label{sect:prelim}
Throughout this paper, $G$ denotes a compact abelian group, and $\widehat{G}$ 
its character group. Throughout this section, we will write the group operations 
in $G$ and $\widehat{G}$ multiplicatively. 
In the context of compact groups, Bochner's theorem yields that every function 
$f$ in $ \p(G)$ has a uniformly converging Fourier series with positive 
coefficients. I.e., there is a subset $ K$ of $\widehat{G}$ and a sequence 
$(a_\gamma)_{\gamma\in K}$ of strictly positive numbers such that \be  
\label{eqn:pd_fourseries} f(x)=\underset{\gamma\in K}\sum a_\gamma\gamma(x). 
\ben Let $F=\{x_1,\ldots,x_n\}$ be a subset of $G$ and $c=(c_1,\ldots,c_n)^T$ a 
vector in $\C^n$. A function $p_{c,F}$ on $\widehat{G}$ is defined via \be 
\gamma\mapsto \underset{i=1}{\overset{n}{\sum}}c_i\gamma(x_i)\ben and we call 
such a function a trigonometric polynomial on $\widehat{G}$. Note that the space 
of trigonometric polynomials is closed under addition, multiplication and 
complex conjugation. Furthermore, since characters over abelian groups are 
linearly independent, any trigonometric polynomial arising from pairwise 
different $x_i$ with nonzero coefficients $c_i$ will be nonzero. If $f$ is given 
by (\ref{eqn:pd_fourseries}), then one has \be \label{eqn:spd_sets} 
\underset{i,j=1}{\overset{n}{\sum}}c_i\overline{c_j}f(x_j^{-1}x_i)=\underset{\gamma\in 
K}\sum a_\gamma\left|\sni c_i\gamma(x_i)\right|^2=\underset{\gamma\in K}\sum 
a_\gamma|p_{c,F}(\gamma)|^2.\ben 
In particular, (\ref{eqn:spd_sets}) vanishes iff the trigonometric polynomial 
$p_{c,F}$ vanishes on $K$. This observation motivates the following definition:
\begin{defi}
A subset $K$ of $\widehat{G}$ is called strictly positive definite if there is 
no trigonometric polynomial vanishing on $K$ except for the zero polynomial.
\end{defi}
The above calculations have established the following result:  
\begin{theorem} \label{thm:spd_sets}
A function $f \in \p(G)$ is in $ \p^+(G)$ iff the support of $ \hat{f}$ is 
strictly positive definite.
\end{theorem}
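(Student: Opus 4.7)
The theorem essentially collects in one statement the content of the computations already displayed, so the plan is to verify both directions by chasing equivalences through equation (\ref{eqn:spd_sets}).

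First I would unfold the definition of $\p^+(G)$: $f \in \p^+(G)$ fails exactly when there exist pairwise distinct points $x_1, \ldots, x_n \in G$ and nonzero coefficients $c_1, \ldots, c_n \in \C$ with $\sum_{i,j} c_i \overline{c_j} f(x_j^{-1} x_i) = 0$ (positivity of $f$ rules out negative values, so failure of strict positivity means the sum vanishes). Writing $f$ as in (\ref{eqn:pd_fourseries}) with $K = \mathrm{supp}(\widehat{f})$ and $a_\gamma > 0$, the identity (\ref{eqn:spd_sets}) rewrites this quadratic form as
\[
\sum_{\gamma \in K} a_\gamma \, \bigl| p_{c,F}(\gamma) \bigr|^2.
\]
Since every $a_\gamma$ is strictly positive, this sum vanishes if and only if $p_{c,F}$ vanishes identically on $K$.

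So $f \notin \p^+(G)$ is equivalent to the existence of pairwise distinct $x_i$ and nonzero $c_i$ such that $p_{c,F}$ vanishes on $K$. To match this with the definition of a strictly positive definite set, I would use the linear independence of characters already noted in the excerpt: a trigonometric polynomial $q$ is nonzero (as a function on $\widehat{G}$) if and only if it can be written in the form $p_{c,F}$ for some pairwise distinct $x_i$ and nonzero $c_i$. The nonobvious direction here is that \emph{every} nonzero trigonometric polynomial admits such a reduced representation, which one obtains by collecting duplicate $x_i$'s into a single summand and then discarding those whose coefficient turns out to be zero.

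Combining these observations, $f \notin \p^+(G)$ is equivalent to the existence of a nonzero trigonometric polynomial vanishing on $K$, which is by definition the negation of ``$K$ is strictly positive definite.'' The main obstacle, if any, is the bookkeeping step about reducing a general trigonometric polynomial to one with distinct points and nonzero coefficients; everything else is a direct substitution via (\ref{eqn:spd_sets}).
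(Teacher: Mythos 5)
Your proposal is correct and follows the same route as the paper: the paper states the theorem with the one-line justification ``the above calculations have established the following result,'' referring to the identity (\ref{eqn:spd_sets}) and the remark on linear independence of characters, and your write-up simply makes those steps explicit, including the (easy) reduction of an arbitrary trigonometric polynomial to one with distinct points and nonzero coefficients, which the paper leaves implicit.
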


Let us now collect some basic properties of strictly positive definite sets. The 
following arguments will rely mainly on duality theory. In particular, we recall 
the notion of annihilator subgroups: For $M \subset \widehat{G}$, let $M^\bot = 
\{ x \in G: \gamma(x) = 1, \forall \gamma \in M \}$. Likewise, $N^\bot = \{ 
\gamma \in \widehat{G} : \gamma (x) = 1, \forall x \in N \}$ for $N \subset G$. 

\begin{lemma}
 Let $K \subset \widehat{G}$ be strictly positive definite. Then $K$ generates 
$\widehat{G}$.
\end{lemma}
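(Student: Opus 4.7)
The plan is to argue by contraposition: if $K$ fails to generate $\widehat{G}$, then I will exhibit a nonzero trigonometric polynomial that vanishes on $K$, contradicting strict positive definiteness in the sense of the definition above.

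First I would let $H$ denote the subgroup of $\widehat{G}$ generated by $K$ and assume $H \neq \widehat{G}$. Since $G$ is compact, $\widehat{G}$ is discrete, so $H$ is automatically closed. By Pontryagin duality, a proper closed subgroup of $\widehat{G}$ has nontrivial annihilator in $G$; that is, $H^\bot = K^\bot \neq \{e_G\}$. Pick some $x \in K^\bot$ with $x \neq e_G$.

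Next I would consider the trigonometric polynomial $p_{c,F}$ on $\widehat{G}$ associated with $F = \{x, e_G\}$ and $c = (1,-1)^T$, i.e. $\gamma \mapsto \gamma(x) - 1$. Since $x \neq e_G$ and characters of $G$ are linearly independent, the paper's earlier remark guarantees that $p_{c,F}$ is not the zero polynomial. On the other hand, for every $\gamma \in K \subset H$ we have $\gamma(x) = 1$ by the choice of $x \in K^\bot$, so $p_{c,F}$ vanishes identically on $K$. This contradicts the definition of strict positive definiteness and completes the argument.

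The only nontrivial ingredient is Pontryagin duality for discrete abelian groups, used to conclude $K^\bot \neq \{e_G\}$ from $\langle K\rangle \neq \widehat{G}$; everything else reduces to the linear independence of characters recorded in the preliminaries. So there is no real obstacle here — the proof is essentially a one-line application of duality together with the simplest possible two-term trigonometric polynomial.
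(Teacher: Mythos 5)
Your proof is correct and takes essentially the same approach as the paper: both arguments produce a nontrivial $x \in G$ annihilating $H = \langle K \rangle$ and then observe that the nonzero trigonometric polynomial $\gamma \mapsto \gamma(x) - 1$ vanishes on $K$. The paper constructs $x$ explicitly by lifting a nontrivial character of $\widehat{G}/H$ and invoking Pontryagin duality, whereas you cite the equivalent duality fact that a proper closed subgroup of $\widehat{G}$ has nontrivial annihilator; this is a cosmetic difference only.
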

\begin{proof}
 Assume that $H = \langle K \rangle $ is a proper subgroup. Pick a nontrivial 
character $\tilde{\chi}$ of the quotient group $\widehat{G}/ H$, then 
$\chi(\gamma) = \tilde{\chi}(\gamma H)$ defines a character of $\widehat{G}$. By 
Pontryagin duality there exists $x \in G$ such that $\chi(\gamma) = \gamma(x)$. 
The nonzero trigonometric polynomial $p(\gamma) = \gamma(x)-1$ vanishes on $H 
\supset K$, proving that $K$ is not strictly positive definite. 
\end{proof}

\begin{corollary} \label{cor:spd_metrisable}
 $\p^+(G)$ is nonempty iff $G$ is metrisable.
\end{corollary}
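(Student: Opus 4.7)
The plan is to reduce both directions to the standard duality fact that a compact abelian group $G$ is metrisable if and only if its dual $\widehat{G}$ is countable, and then use the lemma just proved together with Theorem \ref{thm:spd_sets} and Bochner's theorem.

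For the forward implication, suppose $f \in \mathfrak{P}^+(G)$. By Bochner, $f$ admits the expansion (\ref{eqn:pd_fourseries}) with all $a_\gamma > 0$ on $K = \mathrm{supp}(\widehat{f})$. Since the series converges to a continuous function with $\sum_{\gamma \in K} a_\gamma = f(e) < \infty$, the set $K$ is at most countable. By Theorem \ref{thm:spd_sets}, $K$ is a strictly positive definite subset, and by the preceding lemma $K$ generates $\widehat{G}$. A countable subset of an abelian group generates a countable subgroup, so $\widehat{G}$ is countable and $G$ is metrisable.

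For the reverse implication, assume $G$ is metrisable, enumerate $\widehat{G} = \{\gamma_n : n \in \mathbb{N}\}$, and set
\[
  f(x) = \sum_{n=1}^\infty 2^{-n}\gamma_n(x).
\]
This series converges uniformly on $G$, so $f$ is continuous, and $\widehat{f}(\gamma_n) = 2^{-n} > 0$; by Bochner, $f \in \mathfrak{P}(G)$. The support of $\widehat{f}$ is all of $\widehat{G}$, and the linear independence of characters (already used in Section \ref{sect:prelim}) ensures that any nonzero trigonometric polynomial on $\widehat{G}$ is a nonzero function on $\widehat{G}$, hence cannot vanish on $\widehat{G}$ itself. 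Therefore $\widehat{G}$ is a strictly positive definite set, and Theorem \ref{thm:spd_sets} gives $f \in \mathfrak{P}^+(G)$.

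The only step that is not completely routine is the invocation of the duality equivalence ``$G$ metrisable $\Leftrightarrow$ $\widehat{G}$ countable''; this is standard (see, e.g., Rudin), so no real obstacle arises. The proof is essentially a direct combination of the previous lemma with a good choice of witness in the easy direction.
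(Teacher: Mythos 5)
Your proof is correct and follows essentially the same route as the paper: both directions reduce to the duality fact that $G$ is metrisable iff $\widehat{G}$ is countable, the forward direction uses the preceding lemma that a strictly positive definite (hence countable) set generates $\widehat{G}$, and the converse constructs a witness from a summable positive family indexed by all of $\widehat{G}$ (your explicit $2^{-n}$ is just a concrete choice of the paper's ``summable nowhere vanishing family''). The slight extra detail you give about linear independence of characters is already implicit in the paper's appeal to equation (\ref{eqn:spd_sets}).
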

\begin{proof}
Note that by \cite[Theorem 2.2.6]{R}, $G$ is metrisable iff $\widehat{G}$ is 
countable. Now if $\p^+(G)$ is nonempty, there exists a strictly positive 
definite $K \subset \widehat{G}$. Since $K$ is the support of a converging 
Fourier series, $K$ is countable. But then $\widehat{G} = \langle K \rangle$ is 
countable.

For the converse, we pick a summable nowhere vanishing family 
$(a_{\gamma})_{\gamma \in \widehat{G}}$ of positive numbers, which exists by 
countability of $\widehat{G}$. Define $f$ according to (\ref{eqn:pd_fourseries}), 
with $K = \widehat{G}$. Now (\ref{eqn:spd_sets}), with $K=\widehat{G}$, implies 
that $f$ is strictly positive definite. 
\end{proof}

Let us next establish the general implication between strict positive 
definiteness and ubiquity. 
The central question of this paper is when the converse of this result holds.
\begin{lemma} \label{lem:spd_ubiq}
 If $K \subset \widehat{G}$ is strictly positive definite, it is ubiquitous.
\end{lemma}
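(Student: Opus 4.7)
The plan is to prove the contrapositive: if $K$ fails to be ubiquitous, then some nonzero trigonometric polynomial vanishes on $K$. So I assume there exist a finite-index subgroup $H < \widehat{G}$ and $\gamma_0 \in \widehat{G}$ with $\gamma_0 H \cap K = \emptyset$, and I aim to produce a nonzero $p_{c,F}$ that vanishes on $\widehat{G} \setminus \gamma_0 H$, which in particular vanishes on $K$.

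The natural source of such polynomials is the annihilator $H^\perp \subset G$. Since $\widehat{G}$ is discrete and $H$ has finite index, $H$ is closed and $\widehat{G}/H$ is a finite discrete group. Pontryagin duality then identifies $\widehat{\widehat{G}/H}$ with $H^\perp$, so that $H^\perp$ is a finite subgroup of $G$ and restriction induces a canonical isomorphism $\widehat{G}/H \to \widehat{H^\perp}$, $\gamma H \mapsto \gamma|_{H^\perp}$. Consequently, any trigonometric polynomial of the form $p(\gamma) = \sum_{x \in H^\perp} c_x \gamma(x)$ depends only on the coset $\gamma H$, and by linear independence of characters (explicitly noted in the preamble) it is the zero polynomial iff all $c_x$ vanish, iff the corresponding function on the finite group $\widehat{G}/H$ is zero.

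The construction is then routine finite-group Fourier inversion: the indicator function $\mathbf{1}_{\{\gamma_0 H\}}$ on $\widehat{G}/H$ is a nonzero function, and expanding it in the basis of characters of the finite abelian group $\widehat{G}/H$ (which, under the identification above, are exactly the evaluation maps $\chi \mapsto \chi(x)$ for $x \in H^\perp$) yields coefficients $c_x$ so that $p(\gamma) = \sum_{x \in H^\perp} c_x \gamma(x)$ realizes this indicator on cosets. This $p$ is nonzero as a trigonometric polynomial, vanishes on every coset different from $\gamma_0 H$, and hence vanishes on $K$. This contradicts strict positive definiteness of $K$.

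I expect no real obstacle, only bookkeeping: the only step requiring some care is the identification $\widehat{G}/H \cong \widehat{H^\perp}$ and the verification that the resulting trigonometric polynomial is nonzero as a polynomial (rather than merely as a function on some proper subset), both of which are standard consequences of Pontryagin duality together with the linear independence of characters recorded in Section \ref{sect:prelim}.
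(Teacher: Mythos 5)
Your proof is correct, and it reaches the same goal as the paper's argument (a nonzero trigonometric polynomial vanishing on every coset except the missing one $\gamma_0 H$), but the construction is different and somewhat cleaner. The paper first writes $H^\perp=\{x_1,\dots,x_n\}$, builds for each coset $\mu H$ a polynomial $p_\mu(\gamma)=\sum_i |\overline{\mu(x_i)}\gamma(x_i)-1|^2$ vanishing exactly on $\mu H$, and then takes the product of the $p_{\mu_i}$ over all cosets $\mu_i H\neq\gamma_0 H$; the resulting product is nonzero, vanishes precisely outside $\gamma_0 H$, and hence on $K$. You instead invoke Fourier inversion on the finite group $\widehat{G}/H\cong\widehat{H^\perp}$ to write the indicator of $\gamma_0 H$ directly as $p(\gamma)=\frac{1}{[\widehat{G}:H]}\sum_{x\in H^\perp}\overline{\gamma_0(x)}\,\gamma(x)$. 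Both routes rest on the same duality fact, namely that $H^\perp$ is finite when $[\widehat{G}:H]<\infty$, and both rely on linear independence of characters to see that the resulting polynomial is nonzero as a trigonometric polynomial. The one point you should make explicit is that the coefficients $\overline{\gamma_0(x)}/[\widehat{G}:H]$ are nonzero (they all have modulus $1/[\widehat{G}:H]$), so linear independence really does give a nonzero element of the space of trigonometric polynomials; with that noted, the argument is complete. Your version avoids the bookkeeping with generating sets of $H^\perp$ and the product over cosets, at the small cost of invoking the identification $\widehat{G}/H\cong\widehat{H^\perp}$, which is standard.
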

\begin{proof}
First we prove that for proper subgroups $H< \widehat{G}$ of finite index and $ 
\gamma \in \widehat{G}$ there exists a trigonometric polynomial vanishing 
precisely on $\gamma H$: By duality, $H^\bot \cong \left( \widehat{G}/H\right)^{\wedge}$ is finite, and 
thus $H = H^{\bot \bot} = \{ x_1,\ldots,x_n \}^\bot$, with $x_1,\ldots,x_n \in 
G$. Hence, if we define a trigonometric polynomial $p$ by \be 
p(\mu)=\sni|\overline{\gamma(x_i)}\mu(x_i)-1|^2 \mbox{ , } (\mu\in G),\ben we 
find that $p(\mu)=0$ iff $\gamma^{-1} \mu \in \{x_1,\ldots,x_n \}^\bot$, iff 
$\mu\in \gamma H$.

Now, if $K$ is not ubiquitous, then $K \cap \gamma H = \emptyset$ for suitable 
$H$ of finite index, and $\gamma \in \widehat{G}$. Write $\widehat{G} \setminus 
\gamma H = \bigcup_{i=1}^m \mu_i H$, and pick trigonometric polynomials $p_i$ 
vanishing precisely on $\mu_i H$. Then $\prod_{i=1}^m p_i$ is a trigonometric 
polynomial vanishing precisely outside of $\gamma H$. It is therefore nonzero 
and vanishes on $K$, which then cannot be strictly positive definite. 
\end{proof}

Next we characterise finite strictly positive definite subsets. As a byproduct, 
we clarify the case of finite groups.  
\begin{lemma} \label{lem:finite_spd}
 Let $K \subset \widehat{G}$ be finite. Then $K$ is strictly positive definite 
iff $G$ is finite and $K = \widehat{G}$. 
\end{lemma}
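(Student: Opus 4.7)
The plan is to handle the two directions separately, with the reverse direction being essentially a linear algebra remark and the forward direction requiring a bit of structure theory plus the ubiquity lemma.

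For the ``if'' direction I would simply invoke the observation made just after the definition of trigonometric polynomials: any $p_{c,F}$ coming from pairwise distinct $x_i$ with nonzero $c_i$ is a nonzero function on $\widehat{G}$, by linear independence of characters of the discrete dual. Therefore no nonzero trigonometric polynomial can vanish on all of $K = \widehat{G}$, which by definition means $K$ is strictly positive definite.

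For the ``only if'' direction, suppose $K \subset \widehat{G}$ is finite and strictly positive definite. Two things follow immediately from results already proved: the lemma above forces $\widehat{G} = \langle K \rangle$ to be a finitely generated (discrete) abelian group, and Lemma \ref{lem:spd_ubiq} tells us that $K$ is ubiquitous. I would then argue that $\widehat{G}$ must be finite. The key input is the structure theorem: a finitely generated abelian group is of the form $\mathbb{Z}^r \times T$ with $T$ finite, and if it is infinite then $r \geq 1$, so it surjects onto $\mathbb{Z}$. Pulling back $n\mathbb{Z}$ under such a surjection produces subgroups $H_n < \widehat{G}$ of index $n$. Ubiquity then requires $K$ to meet every one of the $n$ cosets of $H_n$, giving $|K| \geq n$ for every $n$, contradicting finiteness of $K$. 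Hence $\widehat{G}$ is finite, and by Pontryagin duality so is $G$.

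To conclude $K = \widehat{G}$, I would specialise ubiquity to $H = \{1\}$, which is now of finite index: every singleton coset $\{\gamma\}$ must meet $K$, so every $\gamma \in \widehat{G}$ lies in $K$.

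The only step with any substance is the observation that an infinite finitely generated abelian group admits subgroups of arbitrarily large finite index; this is the main (and only mild) obstacle and is handled by the structure theorem. Everything else is a direct application of results already assembled in the preliminaries.
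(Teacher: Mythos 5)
Your proof is correct, but the key step — showing that $G$ must be finite — is handled by a genuinely different route than the paper's. You first invoke the earlier (unnumbered) lemma that a strictly positive definite set generates $\widehat{G}$, concluding that $\widehat{G}$ is finitely generated; you then apply Lemma \ref{lem:spd_ubiq} to get ubiquity, and use the structure theorem for finitely generated abelian groups to produce, if $\widehat{G}$ were infinite, a surjection onto $\mathbb{Z}$ and hence subgroups of index $n$ for every $n$, which forces $|K| \ge n$ and contradicts finiteness of $K$. The paper instead makes a single linear-algebra observation: by linear independence of the maps $\gamma \mapsto \gamma(x)$, the space of trigonometric polynomials on $\widehat{G}$ has dimension $|G|$, so when $G$ is infinite this space is infinite-dimensional and the restriction map $p \mapsto p|_K$ to a finite $K$ cannot be injective. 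That dimension count is shorter and avoids both the structure theorem and the earlier generation lemma; your argument costs more machinery but has the virtue of exhibiting explicitly the finite-index subgroups that a finite ubiquitous set cannot dodge, which ties in nicely with the ubiquity theme of the paper. Both proofs converge for the remaining steps: once $G$ is known to be finite, ubiquity applied to the trivial subgroup yields $K = \widehat{G}$, and the converse is the linear-independence remark in both cases.
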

\begin{proof}
 Note that by definition, $K$ is strictly positive definite iff the restriction 
map $p \mapsto p|_K$, defined on the space of trigonometric polynomials, has 
trivial kernel. If $G$ is infinite, the space of trigonometric polynomials on 
$\widehat{G}$ is infinite-dimensional, precluding the existence of finite 
strictly positive definite sets. 

Thus, if $K$ is finite, $G$ has to be finite as well, and ubiquity of $K \subset 
\widehat{G}$ implies $K = \widehat{G}$.  The converse is obvious. 
\end{proof}

 Some clarification concerning the converse of Lemma \ref{lem:spd_ubiq} is 
obtained by considering the torsion subgroup $G_t$ of $G$, defined as
\[
 G_t = \{ x \in G : x^n = e_G, \mbox{ for suitable } n \in \mathbb{N} \}~.
\] The torsion subgroup is usually not closed; for instance, the torsion 
subgroup of the torus group is dense. The following observation indicates how 
the torsion subgroup is related to ubiquity.  
\begin{lemma} \label{lem:dual_torsionsubg}
 Let 
\[ H_0 = \bigcap_{H< \widehat{G}, [\widehat{G}:H] < \infty} H~.\]
Then 
\[
  H_0 = G_t^\bot~,~ H_0^{\bot} = \overline{G_t}
\]
\end{lemma}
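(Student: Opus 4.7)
The plan is to establish the first equality $H_0 = G_t^\bot$ by a duality argument that identifies the finite-index subgroups of $\widehat{G}$ with the finite subgroups of $G$ via annihilators; the second equality then follows by applying the standard identity $N^{\bot\bot} = \overline{N}$.

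First I would recall the basic facts from Pontryagin duality used here. Since $G$ is compact, $\widehat{G}$ is discrete, so every subgroup of $\widehat{G}$ is closed and satisfies $H^{\bot\bot} = H$. For a closed subgroup $H \le \widehat{G}$, the quotient $\widehat{G}/H$ is discrete, and its dual is canonically identified with $H^\bot \le G$. In particular, $[\widehat{G}:H] < \infty$ iff $\widehat{G}/H$ is a finite discrete group iff $H^\bot$ is a finite subgroup of $G$. Conversely, any finite subgroup $K \le G$ is automatically closed (as $G$ is Hausdorff), and $\widehat{G}/K^\bot \cong \widehat{K}$ is finite, so $K^\bot$ has finite index in $\widehat{G}$. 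Thus $H \mapsto H^\bot$ sets up a bijection between the finite-index subgroups of $\widehat{G}$ and the finite subgroups of $G$, with inverse $K \mapsto K^\bot$ (both coinciding with $K^{\bot\bot}$).

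Next I would use this bijection together with the trivially verified identity
\be
 \bigcap_{\alpha} N_\alpha^\bot = \Bigl( \bigcup_\alpha N_\alpha \Bigr)^\bot
\ben
to rewrite
\be
H_0 = \bigcap_{H < \widehat{G},\, [\widehat{G}:H]<\infty} H = \bigcap_{K\le G,\,|K|<\infty} K^\bot = \Bigl( \bigcup_{K\le G,\,|K|<\infty} K \Bigr)^\bot.
\ben
The union on the right is exactly $G_t$, since any element of finite order generates a finite subgroup and, conversely, every element of a finite subgroup has finite order. This yields $H_0 = G_t^\bot$.

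Finally, the second equality is immediate: taking annihilators once more and using $N^{\bot\bot} = \overline{N}$ from Pontryagin duality, we get $H_0^\bot = G_t^{\bot\bot} = \overline{G_t}$. I do not anticipate any significant obstacle here; the entire argument is bookkeeping with duality, and the only point requiring mild care is the standard correspondence between finite-index subgroups of $\widehat{G}$ and finite subgroups of $G$, which is already implicit in the proof of Lemma \ref{lem:spd_ubiq}.
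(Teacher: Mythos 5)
Your proof is correct and follows essentially the same duality argument as the paper: both rest on the annihilator correspondence between finite-index subgroups of $\widehat{G}$ and finite subgroups of $G$ (via $H \leftrightarrow H^\bot$ and the isomorphism $\widehat{G}/H^{\bot\bot} \cong \widehat{H^\bot}$), together with the observation that $G_t$ is the union of the finite subgroups of $G$. The only cosmetic difference is that you package this as a single computation using the identity $\bigcap_\alpha N_\alpha^\bot = \bigl(\bigcup_\alpha N_\alpha\bigr)^\bot$, whereas the paper proves the two inclusions $G_t \subset H_0^\bot$ and $G_t^\bot \subset H_0$ separately and then combines them via $H_0 = H_0^{\bot\bot}$.
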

\begin{proof}
 We first prove $G_t \subset H_0^\bot$: By the isomorphism theorem for groups, 
$\gamma \in H_0$ iff $ \phi(\gamma) = e$, for all group homomorphisms $\phi$ 
with finite image. In particular, if $x \in G_t$, the mapping $\widehat{G} \ni 
\gamma \mapsto \gamma(x) \in \mathbb{T}$ has finite image,
since $\gamma(x)^n = \gamma(x^n) = 1$. It follows for $\gamma \in H_0$ that 
$\gamma(x) = 1$, which means $x \in H_0^\bot$. 

For the proof of $G_t^\bot \subset H_0$ let $H< \widehat{G}$ be of finite index. 
By duality theory, $H^\bot \cong \left( \widehat{G}/H \right)^{\wedge}$ is finite, hence a subgroup of 
$G_t$. But then $G_t^\bot \subset H^{\bot \bot} = H$. Since $H< G$ was chosen 
arbitrary of finite index, it follows that $G_t^\bot \subset H_0$. 

Both inclusions shown so far imply $H_0 = H_0^{\bot \bot} \subset G_t^\bot 
\subset H_0$, and thus $H_0 = G_t^\bot$. The second equality follows from this. 
\end{proof}

% 
% The following consequence will be useful in the treatment of the case 
%$G =F 
%\times \mathbb{T}^r $:
% \begin{lemma} \label{lem:finite_index_subgr}
% Suppose that $G_t$ is dense. Then, for every subgroup $H < \widehat{G}$ and 
%every $\gamma \in \widehat{G} \setminus H$ there exists $H< H_1 < \widehat{G}$ 
%with $\gamma \not\in H_1$ and $[\widehat{G}:H_1] < \infty$. 
% \end{lemma}
% \begin{prf}
% Let $H^\bot < G$ denote the annihilator subgroup of $H$. Then $H^\bot$ is a 
%closed subgroup of $G$, and by duality theory, $\widehat{G}/H  \simeq H^\bot$. 
% \end{prf}

We now settle the extreme cases $G_t=G$ and $G_t= \{ e \}$. First the good news. 

\begin{theorem}
 Let $G$ be a torsion group, and $K \subset \widehat{G}$. Then $K$ is strictly 
positive definite iff $K$ is ubiquitous.  
\end{theorem}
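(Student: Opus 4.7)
The forward direction is already Lemma \ref{lem:spd_ubiq}, so the plan is to prove the converse: assuming $G$ is a torsion group and $K \subset \widehat{G}$ is ubiquitous, I want to show that every nonzero trigonometric polynomial $p_{c,F}$ has a nonzero value somewhere on $K$. Let $F = \{x_1,\ldots,x_n\} \subset G$ be pairwise distinct points and $c = (c_1,\ldots,c_n)^T$ have all entries nonzero.

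The key structural observation is that since $G$ is a torsion group, the subgroup $\Gamma := \langle x_1,\ldots,x_n \rangle$ generated by the support of $F$ is a finitely generated torsion abelian group, hence finite. Consequently, its annihilator $H := \Gamma^\bot \subset \widehat{G}$ is a subgroup of finite index, because $\widehat{G}/H \cong \widehat{\Gamma}$ by Pontryagin duality. This is the point where the torsion hypothesis on $G$ is used in an essential way.

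Next, I would observe that $p_{c,F}$ is constant on cosets of $H$: for any $\gamma \in \widehat{G}$ and $\mu \in H$, we get $p_{c,F}(\gamma\mu) = \sum_i c_i \gamma(x_i)\mu(x_i) = p_{c,F}(\gamma)$, since $\mu(x_i) = 1$ whenever $x_i \in \Gamma$. Hence $p_{c,F}$ descends to a well-defined function on the quotient $\widehat{G}/H \cong \widehat{\Gamma}$, which corresponds to the trigonometric polynomial $\tilde{\gamma} \mapsto \sum_i c_i \tilde{\gamma}(x_i)$ on $\widehat{\Gamma}$. Since the $x_i$ remain pairwise distinct in the finite group $\Gamma$ and characters of $\Gamma$ separate points (equivalently, are linearly independent on $\Gamma$), this descended polynomial is nonzero, so it takes a nonzero value at some character $\tilde{\gamma}_0 \in \widehat{\Gamma}$.

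Lifting $\tilde{\gamma}_0$ back to a coset representative $\gamma_0 \in \widehat{G}$, I obtain a coset $\gamma_0 H$ on which $p_{c,F}$ is constantly nonzero. Ubiquity of $K$ then guarantees $K \cap \gamma_0 H \neq \emptyset$, producing a point in $K$ at which $p_{c,F}$ does not vanish. I do not expect a real obstacle here: the only subtlety is confirming that finitely generated torsion abelian groups are finite, and that restriction of the polynomial to the quotient remains nonzero — both are standard. The argument would not generalize to torsion-free $G$ precisely because $\Gamma$ can then be infinite, so $\Gamma^\bot$ need not have finite index, which is consistent with the fact (flagged in the introduction) that the converse fails for torsion-free groups.
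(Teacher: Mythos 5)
Your proof is correct and uses the same key idea as the paper: since $G$ is torsion, $\Gamma = \langle x_1,\ldots,x_n\rangle$ is finite, so $H = \Gamma^\bot$ has finite index and $p_{c,F}$ is $H$-invariant, hence nonzero on some whole coset $\gamma_0 H$, which must meet $K$ by ubiquity. The paper states this as a contrapositive (not strictly positive definite $\Rightarrow$ not ubiquitous) and skips the explicit descent to $\widehat{\Gamma}$, instead noting directly that the nonzero $H$-invariant $p$ vanishing on $K$ forces an $H$-coset into $\widehat{G}\setminus K$; otherwise the arguments are identical.
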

\begin{proof}
Only the ``if''-part needs to be shown. 
Suppose that $K \subset \widehat{G}$ is not strictly positive definite. Hence 
there is a nontrivial trigonometric polynomial 
\[
 p : \widehat{G} \ni \gamma \mapsto \sum_{i=1}^n c_i \gamma(x_i)
\] of $\widehat{G}$ vanishing on $K$. Since $G$ is a torsion group, $\langle 
x_1,\ldots,x_n \rangle$ is finite, and by duality theory, $H = \{ x_1,\ldots,x_n 
\}^\bot \subset \widehat{G}$ has finite index. Furthermore, for any $ \gamma \in 
\widehat{G}$ and $\eta \in H$, one has 
\[
 p(\gamma \eta) = \sum_{i=1}^n c_i \gamma(x_i) \eta(x_i) = p(\gamma) ~,
\] implying that $p$ vanishes on an $H$-invariant set. In particular, since $p$ 
is nonzero and vanishes on $K$, $\widehat{G} \setminus K$ contains an $H$-coset. 
Thus $K$ is not ubiquitous. 
\end{proof}

The theorem applies to groups of the form $G = \prod_{i=1}^\infty F_i$, with 
finite groups $F_i$ of bounded order. The other extreme provides a whole class 
of examples for which the converse of Lemma \ref{lem:spd_ubiq} fails. 
\begin{theorem} \label{thm:torsionfree}
 Suppose that $G$ is nontrivial and torsion-free. Then every nonempty subset of 
$\widehat{G}$ is ubiquitous, but finite subsets are not strictly positive 
definite. 
\end{theorem}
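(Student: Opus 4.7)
The plan is to dispatch the two assertions independently, leveraging Lemma \ref{lem:dual_torsionsubg} for ubiquity and Lemma \ref{lem:finite_spd} for the failure of strict positive definiteness on finite subsets.

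For the ubiquity half, the key observation I would make is that under the torsion-free hypothesis $\widehat{G}$ has no proper subgroups of finite index at all. Indeed, since $G_t = \{e_G\}$, one has $G_t^\bot = \widehat{G}$, and Lemma \ref{lem:dual_torsionsubg} then yields
\[
 H_0 = G_t^\bot = \widehat{G}.
\]
But $H_0$ was defined as the intersection of all proper finite-index subgroups of $\widehat{G}$, and such an intersection can only equal the whole group when the indexing family is empty: any genuine proper subgroup would trivially force the intersection to be proper as well. Hence every finite-index subgroup of $\widehat{G}$ coincides with $\widehat{G}$ itself, so $\gamma H \cap K = K$ for any $\gamma \in \widehat{G}$ and any such $H$, which is nonempty whenever $K$ is.

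For the second claim, I would first note that a nontrivial torsion-free group cannot be finite, because every finite group is a torsion group. Thus $G$ is infinite, and Lemma \ref{lem:finite_spd} immediately rules out finite strictly positive definite subsets of $\widehat{G}$, since that lemma requires $G$ to be finite.

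No single step here presents a serious obstacle; the theorem is largely a harvest of the preceding preparation. The one mildly delicate point is the deduction that the equality $H_0 = \widehat{G}$ forces the family of proper finite-index subgroups to be empty, but this reduces to the standard convention on intersections of empty families together with the remark that the existence of even one proper finite-index subgroup would contract $H_0$ to a proper subset of $\widehat{G}$.
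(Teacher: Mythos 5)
Your proposal is correct and follows essentially the same route as the paper's own proof: both deduce $H_0 = G_t^\bot = \widehat{G}$ from Lemma \ref{lem:dual_torsionsubg}, conclude that $\widehat{G}$ has no proper finite-index subgroups and hence every nonempty subset is ubiquitous, and then invoke Lemma \ref{lem:finite_spd} after noting that a nontrivial torsion-free group is infinite. The only cosmetic difference is that you spell out the elementary deduction ``$H_0 = \widehat{G}$ forces the family of proper finite-index subgroups to be empty,'' which the paper leaves implicit.
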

\begin{proof}
 Suppose $G_t$ is trivial. With $H_0$ as defined in Lemma \ref{lem:dual_torsionsubg}, 
one obtains from \ref{lem:dual_torsionsubg} that $\widehat{G}/H_0$ is trivial 
also. Hence $\widehat{G}$ has no proper finite index subgroups, and then every 
nonempty subset is ubiquitous. Since $G$ is torsion-free and nontrivial, it is 
infinite, and then Lemma \ref{lem:finite_spd} implies that no finite $K \subset 
\widehat{G}$ is strictly positive definite. 
\end{proof}

This result applies, for instance, to the group $\mathbb{Z}_p$ of $p$-adic 
integers. 

%% The correct journal style for \specialsection is all uppercase; a known bug
%% in amsart.cls prevents this, so input must be uppercase until it is fixed.
%\specialsection*{This is a Special Section Head}
%\specialsection*{THIS IS A SPECIAL SECTION HEAD}
%This is an example of a special section head%
%%%%%%%%%%%%%%%%%%%%%%%%%%%%%%%%%%%%%%%%%%%%%%%%%%%%%%%%%%%%%%%%%%%%%%%%
%\footnote{Here is an example of a footnote. Notice that this footnote
%text is running on so that it can stand as an example of how a footnote
%with separate paragraphs should be written.
%\par
%And here is the beginning of the second paragraph.}%
%%%%%%%%%%%%%%%%%%%%%%%%%%%%%%%%%%%%%%%%%%%%%%%%%%%%%%%%%%%%%%%%%%%%%%%%

\section{The case $G = F \times \mathbb{T}^r$}

The remainder of the paper is reserved for $G = F \times \mathbb{T}^r$. We 
identify the dual group of $G$ in the canonical way with $F \times 
\mathbb{Z}^r$, and write the latter additively. 
At first we will deal with $\mathbb T^r$ separately. Here we will need a fairly 
deep theorem from number theory. 
\subsection{Products of $ \mathbb{T}$}
 We start with two lemmata that will be needed in the proof of the main 
result. The first one is a fact from elementary group theory.
\begin{lemma} \label{lem:index_fi}
Every finite intersection of subgroups of finite index is a subgroup of finite 
index as well.
\end{lemma}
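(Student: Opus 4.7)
The plan is to reduce, by induction on the number of subgroups, to the two-subgroup case: given $H_1, H_2 < \widehat{G}$ of finite index, I need to show that $H_1 \cap H_2$ has finite index. The induction step is immediate from the identity $\bigcap_{i=1}^{n+1} H_i = \left( \bigcap_{i=1}^{n} H_i \right) \cap H_{n+1}$, so that once the two-subgroup statement is known, $n$ applications handle any finite intersection.

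For the base case I would use the standard embedding argument. Since $\widehat{G}$ is abelian, the quotient sets $\widehat{G}/H_1$ and $\widehat{G}/H_2$ carry canonical group structures, and the homomorphism
\[
 \phi : \widehat{G} \to \widehat{G}/H_1 \times \widehat{G}/H_2, \qquad \gamma \mapsto (\gamma H_1, \gamma H_2)
\]
has kernel exactly $H_1 \cap H_2$. The first isomorphism theorem then yields an embedding of $\widehat{G}/(H_1 \cap H_2)$ into the finite product $\widehat{G}/H_1 \times \widehat{G}/H_2$, so
\[
 [\widehat{G} : H_1 \cap H_2] \leq [\widehat{G}:H_1] \cdot [\widehat{G}:H_2] < \infty.
\]

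There is no genuine obstacle here; this is Poincaré's classical observation, and in the abelian setting (which is the only setting in which the lemma will be applied) the quotient-embedding route is the cleanest. I would expect the proof to occupy only a few lines in the paper.
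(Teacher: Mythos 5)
Your argument is correct. The paper's proof takes a slightly different route: instead of embedding $\widehat{G}/(H_1 \cap H_2)$ into the product $\widehat{G}/H_1 \times \widehat{G}/H_2$ (which uses that quotients by $H_1$, $H_2$ are groups, i.e.\ normality, automatic here since $\widehat{G}$ is abelian), it invokes two coset-counting facts valid for \emph{arbitrary} groups: $[B:A\cap B] \le [G:A]$ for any subgroups $A,B$, and the tower law $[G:A]=[G:B][B:A]$ for $A<B<G$. Combining these gives $[\widehat{G}:H_1\cap H_2]=[\widehat{G}:H_2]\,[H_2:H_1\cap H_2]\le [\widehat{G}:H_2]\,[\widehat{G}:H_1]<\infty$, and the induction proceeds exactly as in your proposal. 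Your quotient-embedding version is the tidier statement of Poincar\'e's lemma in the abelian setting the paper actually needs; the paper's version has the small advantage of not relying on normality and so states the lemma in the generality in which it is usually cited. Both yield the same bound $[\widehat{G}:H_1\cap H_2]\le [\widehat{G}:H_1][\widehat{G}:H_2]$ and are equally short.
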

\begin{proof}
This follows by induction and 
\begin{enumerate}
\item If $A$ and $B$ are subgroups of a group $G$, then $[B:A\cap B] \le [G:A]$. 
\item If $B<G$ and $A<B$, then $[G:A] = [G:B][B:A]$. 
\end{enumerate}
\end{proof}
\begin{lemma} \label{lem:fi_supergrp}
Let H be subgroup of infinite index in $\Z^r$ and $y\in \Z^r\setminus H$. There 
is a subgroup $G$ of finite index such that $H\subset G$ and $y\notin G$.
\end{lemma}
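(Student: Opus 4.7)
The plan is to reduce to the quotient and exploit the fact that every finitely generated abelian group is residually finite. Let $\pi:\Z^r \to A := \Z^r/H$ be the canonical projection and set $a := \pi(y)$; since $y \notin H$, the element $a$ is nonzero. If I can find a finite-index subgroup $B \le A$ with $a \notin B$, then $G := \pi^{-1}(B)$ will automatically contain $H = \ker \pi$, have finite index in $\Z^r$, and avoid $y$, which is exactly what the lemma demands. (The infinite-index hypothesis on $H$ is not actually needed for this reduction; if $H$ itself had finite index, $G = H$ would already work.)

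To produce $B$, I would appeal to the structure theorem and write $A \cong \Z^k \oplus T$ with $T$ a finite torsion group; correspondingly decompose $a = (v,t)$. If $v \neq 0$, pick a coordinate $v_i \neq 0$ and any integer $N > |v_i|$; then $NA$ has finite index in $A$, and the image of $a$ in $A/NA \cong (\Z/N\Z)^k \oplus T/NT$ is nonzero in its $i$-th coordinate, so $a \notin NA$. If instead $v = 0$, then $a = t$ is a nonzero element of the finite abelian group $T$; the characters of $T$ separate points, so there is a homomorphism $\chi : T \to \Z/m\Z$ with $\chi(a) \ne 0$, and extending by zero on the $\Z^k$ summand produces a homomorphism $A \to \Z/m\Z$ whose kernel has finite index in $A$ and does not contain $a$.

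In either case the required $B$ has been constructed, and $G := \pi^{-1}(B)$ finishes the proof. There is no genuine obstacle here; the argument is essentially the standard proof that finitely generated abelian groups are residually finite, combined with the correspondence between subgroups of $\Z^r$ containing $H$ and subgroups of $\Z^r/H$.
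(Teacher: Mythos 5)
Your proof is correct, and it rests on the same foundation as the paper's, namely the structure theory of finitely generated abelian groups, but you phrase it through the quotient $\Z^r/H$ and residual finiteness, whereas the paper applies the elementary divisor theorem directly to $\Z^r$: it produces a basis $x_1,\ldots,x_r$ of $\Z^r$ adapted to $H$, so that $\alpha_1 x_1,\ldots,\alpha_s x_s$ is a basis of $H$ with $s<r$, then enlarges $H$ to a finite-index $G$ generated by $\alpha_1 x_1,\ldots,\alpha_s x_s$ together with suitable nonzero multiples of $x_{s+1},\ldots,x_r$, with the multiples chosen according to the coordinates of $y$ to exclude $y$ from $G$. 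The two arguments are morally the same (your $B \le \Z^r/H$ is the paper's $G/H$), and the difference is one of packaging: the paper's version stays concrete in $\Z^r$ and avoids choosing a splitting of the torsion part of the quotient, while yours is a touch more conceptual and, as a bonus, makes transparent your correct observation that the infinite-index hypothesis is superfluous, since when $[\Z^r:H]<\infty$ one may simply take $G=H$.
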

\begin{proof}
Let $H$ be a subgroup of infinite index in $\Z^r$ and $y\notin H$. Then there is 
basis $x_1,\ldots,x_r$ of $\Z^r$ and some $\alpha_1,\ldots, \alpha_r$ in 
$\Z\setminus\{0\}$ such that $ \alpha_1x_1,\ldots,\alpha_sx_s$ form a basis of 
$H$ and $(\underset{i=1}{\overset{s}{\bigoplus}}\Z x_i)/H\cong 
\underset{i=1}{\overset{s}{\bigoplus}}(\Z/\alpha_i\Z)$, see \cite[2.9.2]{B}. In 
particular, we have $s<r$. Now $y$ can be expressed as $y=\underset{i=1}{\overset{r}{\sum}}\beta_ix_i$ 
with unique entire numbers $\beta_1,\ldots,\beta_r$. If
$\beta_{s+1}=\ldots=\beta_{r}=0$ we define $G$ to be the subgroup generated by $ \alpha_1x_1,\ldots,\alpha_sx_s,x_{s+1},\ldots,x_r$ and note that $G\supset H$ is of finite index, see 3.1, and $y\notin G$ due to the uniqueness of the coefficients. If $\beta_i\neq 0$ for some $i\in\{s+1,\ldots,r\}$ then $G$ is defined as the subgroup generated of $\alpha_1x_1,\ldots \alpha_rx_r$ with $\alpha_{s+1},\ldots,\alpha_r$ in $\Z\setminus\{0\}$ and $\alpha_{i}$ not a divisor of $\beta_{i}$, then once again $H$ is a subset $G$, $G$ is of finite index and $y$ is not an element of $G$ by construction. 
\end{proof}
The main device for showing sufficiency of ubiquity is the following theorem due 
to Laurent, see \cite{L}. For a partition $ \mathcal{P}$ of $\{1,\ldots,n\}$ we 
write $\gamma\in Null(p^\mathcal{P})$ if \be 0=\underset{k\in P}\sum 
c_k\gamma(x_k)\ben for all $P\in \mathcal{P}$. Clearly, it is 
$Null(p^\mathcal{P})\subset Null(p)=p^{-1}(\{0\})$. A partition $\mathcal{P}'$ 
is called finer than $\mathcal{P}$, if $\mathcal{P}'$ is a partition of 
$\{1,\ldots,n\}$ and for all $P'\in \mathcal{P}'$ there is $P\in \mathcal{P}$ 
such that $P'\subset P$. In short, we write $\mathcal{P}'<\mathcal{P}$, if 
$\mathcal{P}'\neq \mathcal{P}$ and $\mathcal{P}'$ is finer than $\mathcal{P}$.  
Furthermore, $\gamma\in Null(p)$ is called maximal with respect to 
$\mathcal{P}$, if $\gamma\in Null(p^\mathcal{P})$ and $\gamma\notin 
Null(p^{\mathcal{P}'})$ for every $\mathcal{P}'<\mathcal{P}$.\\ By 
$H_\mathcal{P}$ we denote the subgroup of $\Z^r$ defined by \begin{eqnarray*} 
H_\mathcal{P} & = & \underset{P\in \mathcal{P}}\bigcap \{\gamma\in\Z^r : 
\gamma(x_k)=\gamma(x_{l}) \mbox{ for } k,l\in P\}\\ & = & \underset{P\in 
\mathcal{P}}\bigcap \{\gamma\in \Z^r: \gamma(x_kx_{l}^{-1})=1 \mbox{ for } 
k,l\in P\} \\ & = & \underset{P\in \mathcal{P}}\bigcap \{x_kx_{l}^{-1}: k,l\in 
P\}^\bot .\end{eqnarray*}
Finally, let $S_\mathcal{P}$ be the set of $\gamma\in Null(p)$, which are 
maximal with respect to $\mathcal{P}$. Then one has the following theorem due to 
Laurent, [L].
\begin{theorem}
$S_\mathcal{P}$ is a finite union of $H_\mathcal{P}$-co-sets.
\end{theorem}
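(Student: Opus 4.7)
My plan is to split the theorem into a soft invariance statement and a hard Diophantine finiteness statement, with the latter resting on Laurent's theorem in its standard unit-equation form.

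First I would prove that $S_\mathcal{P}$ is invariant under addition by $H_\mathcal{P}$, so it is automatically a union of $H_\mathcal{P}$-cosets. The key point is that if $\eta \in H_\mathcal{P}$, then the character value $\eta(x_k)$ depends only on the block $P \in \mathcal{P}$ containing $k$; denote it by $\lambda_P \in \mathbb{T}$. Now for any partition $\mathcal{P}'$ which is finer than or equal to $\mathcal{P}$ and any block $Q$ of $\mathcal{P}'$ contained in the unique $P(Q) \in \mathcal{P}$, one has
\[
\sum_{k \in Q} c_k (\gamma + \eta)(x_k) \;=\; \lambda_{P(Q)} \sum_{k \in Q} c_k \gamma(x_k),
\]
so $\gamma + \eta \in Null(p^{\mathcal{P}'})$ iff $\gamma \in Null(p^{\mathcal{P}'})$. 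Applying this with $\mathcal{P}' = \mathcal{P}$ and with every strictly finer $\mathcal{P}' < \mathcal{P}$ yields the invariance.

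For the finiteness of cosets, I would first reduce to a single-block partition. A short check shows that $\gamma$ is maximal with respect to $\mathcal{P}$ iff for every block $P \in \mathcal{P}$ one has $\sum_{k \in P} c_k \gamma(x_k) = 0$ and, for every proper non-empty subset $Q$ of $P$, $\sum_{k \in Q} c_k \gamma(x_k) \ne 0$. The non-obvious direction uses that if some proper subsum over $Q$ were to vanish, then so would the complementary subsum over $P \setminus Q$ (since the full block sum is zero), and the two-piece refinement of $P$ into $\{Q, P\setminus Q\}$ would certify non-maximality. Writing $S'_P$ for the set of per-block maximal $\gamma$ and $H_P = \{x_k x_l^{-1} : k,l \in P\}^\bot$, this gives $S_\mathcal{P} = \bigcap_{P \in \mathcal{P}} S'_P$ and $H_\mathcal{P} = \bigcap_{P \in \mathcal{P}} H_P$. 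Intersecting finite unions of $H_P$-cosets yields a finite union of cosets of $\bigcap_P H_P = H_\mathcal{P}$, so it suffices to treat a single block.

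For the single-block case $P = \{1, \ldots, n\}$, consider $\phi : \mathbb{Z}^r \to (\mathbb{C}^*)^{n-1}$, $\gamma \mapsto (\gamma(x_i x_n^{-1}))_{1 \le i < n}$, whose kernel is precisely $H_P$. For $\gamma \in S'_P$, dividing $\sum_{i=1}^n c_i \gamma(x_i) = 0$ by $c_n \gamma(x_n)$ gives
\[
\sum_{i=1}^{n-1} \frac{-c_i}{c_n}\, \phi(\gamma)_i \;=\; 1,
\]
and per-block non-degeneracy translates exactly to the condition that no proper subsum of this unit equation vanishes. Laurent's theorem on non-degenerate unit equations in finitely generated multiplicative subgroups of $\mathbb{C}^*$ asserts that there are only finitely many such solutions, so $\phi(S'_P)$ is finite; pulling back through $\phi$ exhibits $S'_P$ as a finite union of $H_P$-cosets. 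The main obstacle is of course Laurent's theorem itself, which I invoke as a black box; everything else is routine combinatorial bookkeeping with partitions, the only subtle point being the per-block decomposition of the maximality condition described above.
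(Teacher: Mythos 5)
The paper does not actually prove this theorem; it states it and attributes it directly to Laurent's paper \cite{L} as a known result, so there is no proof in the paper to compare against. Your proposal supplies a genuine derivation, and it is correct. The three steps are all sound: (i) the $H_\mathcal{P}$-invariance of $S_\mathcal{P}$ follows exactly as you say, since for $\eta \in H_\mathcal{P}$ the value $\eta(x_k)$ is constant on each block of $\mathcal{P}$, hence constant on each block of any refinement of $\mathcal{P}$, and multiplying a block-sum by a nonzero scalar does not change whether it vanishes; (ii) the per-block characterisation of maximality is correct (the complementary-subsum trick for the non-obvious direction is exactly what is needed, and the converse uses that any $\mathcal{P}' < \mathcal{P}$ must properly split at least one block), and intersecting finitely many finite unions of $H_P$-cosets does yield a finite union of $\bigcap_P H_P = H_\mathcal{P}$-cosets; (iii) the normalisation by $c_n\gamma(x_n)$ turns the block equation into $\sum_{i<n} a_i u_i = 1$ with $u_i = \gamma(x_i x_n^{-1})$ landing in the finitely generated multiplicative group $\phi(\mathbb{Z}^r)$, and the absence of vanishing subsums of the original block sum is equivalent to the absence of vanishing non-empty subsums of $\sum a_i u_i$ (the case $n \in Q$ is handled by passing to the complement in $\{1,\ldots,n-1\}$). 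Since the fibres of $\phi$ over $\mathbb{Z}^r$ are precisely the $H_P$-cosets, finiteness of non-degenerate solutions gives that $S'_P$ is a finite union of $H_P$-cosets. One small imprecision: you speak of subgroups of $\mathbb{C}^*$ where the solutions $(u_1,\ldots,u_{n-1})$ naturally lie in a finitely generated subgroup of $(\mathbb{C}^*)^{n-1}$; this is harmless because the two formulations of the unit-equation finiteness theorem are equivalent (enlarge to $\Gamma_0^{n-1}$ with $\Gamma_0$ generated by all coordinates of all generators). In short, the paper treats this as a black box from Laurent; you have opened the box and reduced it, correctly, to the standard non-degenerate unit equation theorem, which is indeed the route taken in the literature (e.g.\ in Pinkus \cite{P}) when this result is spelled out.
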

This theorem can be regarded as a generalisation of a number theoretical result 
of Skolem, Mahler and Lech on linear recurrences, see [P]. We are interested in 
$Null(p)$, so we need only a corollary. By definition we have  \be Null(p) 
\supset\underset{\mathcal{P} \mbox{ partition of } \{1,\ldots,n\} }\bigcup 
S_\mathcal{P}  .\ben Conversely, $\gamma\in Null(p)$ implies $\gamma\in 
Null(p^\mathcal{P})$, where $\mathcal{P}=\{\{1,\ldots,n\}\}$. Suppose now 
$\gamma\notin S_{\mathcal{P}}$ for every partition $\mathcal{P}$. That is, for 
every partition $\mathcal{P}$ the fact $\gamma\in Null(p^\mathcal{P})$ implies  
$\gamma\in Null(p^{\mathcal{P}'})$ for some $\mathcal{P}'<\mathcal{P}$. But 
there are only finitely many partition of $\{1,\ldots,n\}$, where 
$\mathcal{P}_0=\{\{1\},\ldots,\{n\}\}$ is the finest partition with respect to 
$<$. In particular, $\gamma\in Null(p^{\mathcal{P}_0})$ but $\gamma \notin 
S_{\mathcal{P}_0}$ leads to a contradiction. Hence, it follows $\gamma \in 
S_\mathcal{P}$ for some partition $\mathcal{P}$. That is,
\be Null(p) =\underset{\mathcal{P} \mbox{ partition of } \{1,\ldots,n\} }\bigcup 
S_\mathcal{P} \ben
 and we get:  
\begin{corollary} \label{cor:laurent}
Let $p$ be a nontrivial trigonometric polynomial on $\Z^r$. There are finitely 
many subgroups $G_1,\ldots,G_n$ of $\Z^r$ and $x_1,\ldots,x_n\in\Z^r$ such that 
\be Null(p)=\underset{i=1}{\overset{n}{\bigcup}} x_i+G_i.\ben
\end{corollary}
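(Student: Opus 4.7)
The plan is to observe that the statement is essentially an immediate assembly of two ingredients that are already in place: Laurent's theorem, and the partition-theoretic unpacking carried out in the paragraph preceding the corollary. That paragraph established
\[
Null(p) = \bigcup_{\mathcal{P}} S_\mathcal{P},
\]
where the union runs over all partitions $\mathcal{P}$ of the index set $\{1,\ldots,n\}$ that labels the exponents occurring in $p$, of which there are only finitely many (the Bell number $B_n$).

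Laurent's theorem then gives, for each such $\mathcal{P}$, finitely many elements $y_{\mathcal{P},1},\ldots,y_{\mathcal{P},k_\mathcal{P}} \in \Z^r$ with
\[
S_\mathcal{P} = \bigcup_{j=1}^{k_\mathcal{P}} \bigl(y_{\mathcal{P},j} + H_\mathcal{P}\bigr).
\]
Substituting this into the previous display produces
\[
Null(p) = \bigcup_{\mathcal{P}} \bigcup_{j=1}^{k_\mathcal{P}} \bigl(y_{\mathcal{P},j} + H_\mathcal{P}\bigr),
\]
a finite union of cosets of subgroups of $\Z^r$. Relabeling the pairs $(\mathcal{P},j)$ by a single index $i$, and setting $G_i := H_{\mathcal{P}(i)}$, $x_i := y_{\mathcal{P}(i),j(i)}$, yields the asserted representation (reusing the symbol $n$ for the resulting total count of cosets).

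There is no substantive obstacle here, since the genuinely hard work is entirely absorbed into Laurent's theorem, which is invoked as a black box; the corollary is pure bookkeeping. The only minor care needed is to make sure the decomposition $Null(p) = \bigcup_\mathcal{P} S_\mathcal{P}$ is indeed already justified, which it is by the argument immediately above the corollary: any $\gamma \in Null(p)$ belongs to $Null(p^{\mathcal{P}_0})$ for the finest partition $\mathcal{P}_0$, and descending through the (finite) poset of partitions under $<$ must terminate at some partition $\mathcal{P}$ for which $\gamma$ is maximal, i.e., $\gamma \in S_\mathcal{P}$.
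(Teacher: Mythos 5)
Your proposal is correct and is essentially the paper's own argument: combine the decomposition $Null(p)=\bigcup_{\mathcal{P}} S_{\mathcal{P}}$ established in the preceding paragraph with Laurent's theorem applied to each $S_{\mathcal{P}}$, then relabel. One small slip in your paraphrase of the decomposition step: $\gamma\in Null(p)$ places $\gamma$ in $Null(p^{\mathcal{P}})$ for the \emph{coarsest} partition $\mathcal{P}=\{\{1,\ldots,n\}\}$ (a single block), not the finest $\mathcal{P}_0=\{\{1\},\ldots,\{n\}\}$; the descent then runs downward through the finite poset toward $\mathcal{P}_0$ and must terminate at some $\mathcal{P}$ with $\gamma\in S_{\mathcal{P}}$, since $\mathcal{P}_0$ admits no strictly finer refinement. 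This slip does not affect your proof, as you are correctly invoking the identity already justified in the paper.
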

%Note that the only trigonometric polynomial that is identically zero on $\Z^r$ 
%is the zero polynomial.
\begin{theorem} \label{thm:ubiq_spd_torus}
Let $K$ be a subset of $\Z^r$. If K is ubiquitous then it is also strictly 
positive definite.
\end{theorem}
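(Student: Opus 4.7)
The plan is to argue the contrapositive: assume $K$ is not strictly positive definite and use Laurent's structure theorem together with Lemmata \ref{lem:index_fi} and \ref{lem:fi_supergrp} to exhibit a finite-index coset disjoint from $K$, contradicting ubiquity.

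First I would pick a nontrivial trigonometric polynomial $p$ vanishing on $K$. Applying Corollary \ref{cor:laurent} gives a decomposition
\[ Null(p) = \bigcup_{i=1}^n (x_i + G_i) \]
with subgroups $G_i < \mathbb{Z}^r$ and translates $x_i \in \mathbb{Z}^r$. Since $p$ is nontrivial and characters on an abelian group are linearly independent, $p$ is not identically zero, so there exists $y \in \mathbb{Z}^r \setminus Null(p)$; equivalently, $y - x_i \notin G_i$ for every $i$. This $y$ will be the candidate witness.

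Next I would engineer a single finite-index subgroup $H$ witnessing the failure of ubiquity at $y$. For every index $i$ with $G_i$ of finite index, set $H_i := G_i$. For every $i$ with $G_i$ of infinite index, apply Lemma \ref{lem:fi_supergrp} to the pair $(G_i,\, y - x_i)$ to obtain a finite-index subgroup $H_i \supset G_i$ with $y - x_i \notin H_i$. Lemma \ref{lem:index_fi} then ensures that $H := \bigcap_{i=1}^n H_i$ has finite index in $\mathbb{Z}^r$. The crucial check is that $(y + H) \cap K = \emptyset$: any $z$ in that intersection would lie in some $x_i + G_i \subset x_i + H_i$ (because $K \subset Null(p)$), while also $z - y \in H \subset H_i$, forcing $y - x_i \in H_i$ and contradicting the construction of $H_i$. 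Hence $y + H$ is a finite-index coset missing $K$, violating ubiquity.

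The main obstacle I anticipate is exactly the possible presence of infinite-index subgroups $G_i$ in Laurent's decomposition: if every $G_i$ were of finite index, intersecting them via Lemma \ref{lem:index_fi} would immediately expose a missing coset, but in general cosets of infinite-index subgroups can meet every finite-index coset, so one must first replace each such $G_i$ by a finite-index enlargement that still avoids $y - x_i$. This replacement step is precisely what Lemma \ref{lem:fi_supergrp} (a consequence of the elementary divisor description of subgroups of $\mathbb{Z}^r$) delivers, and it is the key technical input beyond Laurent's theorem.
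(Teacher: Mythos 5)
Your proposal is correct and follows essentially the same route as the paper: contrapositive, Laurent's decomposition of $Null(p)$ into cosets, a witness point $y\notin Null(p)$, enlargement of the infinite-index subgroups via Lemma \ref{lem:fi_supergrp}, and intersection via Lemma \ref{lem:index_fi} to produce a finite-index coset missing $K$. The only (cosmetic) difference is that you handle the finite- and infinite-index cases uniformly by setting $H_i=G_i$ when $G_i$ already has finite index, whereas the paper treats them in two steps; the verification is the same.
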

\begin{proof} Assume that $K$ is not strictly positive definite.  Then there 
exists a non-zero trigonometric polynomial $p$ such that $K \subset S$, where 
$S$ denotes the set of its zeros. By Corollary \ref{cor:laurent} we know that 
$S$ can be written as $\underset{i=1}{\overset{n}{\bigcup}}\gamma_i+H_i$ for 
some $\gamma_1,\ldots,\gamma_n$ in $ \Z^r$ and subgroups $H_1,\ldots,H_n$. 
Without loss we can assume that $H_1,\ldots,H_m$ are of finite and 
$H_{m+1},\ldots,H_n$ are of infinite index. Since $p$ is non-zero there is some 
$\gamma$ in $\Z^r\setminus S$. But then \be H'=\underset{i=1}{\overset{m}{\bigcap}}H_i\ben 
satisfies \be \gamma+H'\cap \underset{i=1}{\overset{m}{\bigcup}}\gamma_i+H_i=\emptyset\ben 
and is of finite index, see \ref{lem:index_fi}. Furthermore, for every 
$i=m+1,\ldots,n$ we pick by virtue of Lemma \ref{lem:fi_supergrp} a subgroup 
$I_i$ of finite index such that $H_i\subset I_i$ and $\gamma - \gamma_i \notin 
I_i$. Now we put \be H=H'\cap \underset{i=m+1}{\overset{n}{\bigcap}}I_i,\ben 
then $H$ is still of finite index.
Furthermore, for each $i \in \{ 1, \ldots, m \}$,
\begin{equation*}
 \gamma + H \cap \gamma_i + H_i \subset \gamma + H_i \cap \gamma_i + H_i = 
\emptyset
\end{equation*} by choice of $\gamma$, whereas for  $i \in \{ m+1, \ldots, n 
\}$,
\begin{equation*}
 \gamma + H \cap \gamma_i + H_i \subset \gamma + I_i \cap \gamma_i + I_i = 
\emptyset
\end{equation*}
by choice of $I_i$. 
Hence finally, 
 \be \gamma + H \cap K \subset \gamma+H\cap S=\emptyset ~, \ben   
which shows that $K$ is not ubiquitous. 
\end{proof}
\subsection{Strict Positive Definiteness over Direct Products}
It remains to combine the results for the factors $F$ and $\mathbb{T}^r$, 
obtained in Lemma \ref{lem:finite_spd} and Theorem \ref{thm:ubiq_spd_torus} 
respectively. The following somewhat technical result illustrates that the 
transfer of results for the factors to the product group is not entirely 
trivial. 
\begin{theorem} \label{thm:spd_dirprod}
Let $G = G_1 \times G_2$, with compact groups  $G_1$ and $G_2$. Let  $K\subset 
\widehat{G_1} \times \widehat{G_2}$. For $\gamma\in \widehat{G_1}$ let \be 
K_2(\gamma)=\{\omega:(\gamma,\omega)\in K\}\ben and \be K_1=\{\gamma:K_2(\gamma)\mbox{ 
strictly positive definite}\}.\ben Finally, let \be \widetilde{K}=\underset{\gamma\in 
K_1}\prod\{\gamma\}\times K_2(\gamma)\ben
If $K_1$ is strictly positive definite, then also $\widetilde K$ and in 
particular $K$.
\end{theorem}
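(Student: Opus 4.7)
The plan is to prove strict positive definiteness of $\widetilde{K}$ by contraposition; the assertion for $K$ then follows because $\widetilde{K}\subset K$, so any trigonometric polynomial vanishing on $K$ vanishes on $\widetilde{K}$ as well. I therefore assume that a nonzero trigonometric polynomial
\[
p(\gamma,\omega) \;=\; \sum_{i=1}^n c_i\,\gamma(x_i)\,\omega(y_i), \qquad (x_i,y_i)\in G_1\times G_2,
\]
vanishes on $\widetilde{K}$, and I work toward a contradiction with the SPD property of $K_1$.

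The preparatory step is to rewrite $p$ as a $\widehat{G_2}$-trigonometric polynomial whose coefficients are trigonometric polynomials on $\widehat{G_1}$. After discarding duplications I may assume the pairs $(x_i,y_i)$ are pairwise distinct; grouping the summands according to the distinct values $w_1,\dots,w_\ell$ taken by the $y_i$ yields
\[
p(\gamma,\omega) \;=\; \sum_{k=1}^\ell r_k(\gamma)\,\omega(w_k), \qquad r_k(\gamma) \;=\; \sum_{i:\,y_i=w_k} c_i\,\gamma(x_i).
\]
Each $r_k$ is a trigonometric polynomial on $\widehat{G_1}$. Since the $w_k$ are pairwise distinct, linear independence of the characters of $G_2$ gives the crucial equivalence: for fixed $\gamma$, the map $\omega\mapsto p(\gamma,\omega)$ is the zero trigonometric polynomial on $\widehat{G_2}$ if and only if $r_k(\gamma)=0$ for every $k$. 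Because $p\not\equiv 0$, at least one of the $r_k$ must be nonzero.

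Now I would feed in the fibrewise hypothesis. For every $\gamma\in K_1$ the inclusion $\{\gamma\}\times K_2(\gamma)\subset\widetilde{K}$ forces the restricted polynomial $\omega\mapsto p(\gamma,\omega)$ to vanish on $K_2(\gamma)$; as $K_2(\gamma)$ is strictly positive definite by the very definition of $K_1$, this restricted polynomial must be the zero trigonometric polynomial on $\widehat{G_2}$, and the equivalence above then yields $r_k(\gamma)=0$ for all $k$. Consequently every $r_k$ vanishes on the whole of $K_1$. Picking an index $k$ with $r_k\not\equiv 0$, we obtain a nonzero trigonometric polynomial on $\widehat{G_1}$ vanishing on $K_1$, contradicting strict positive definiteness of $K_1$.

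I do not anticipate a genuine obstacle here; the proof is essentially a change of viewpoint followed by two applications of the SPD property — fibrewise on $\widehat{G_2}$, then on $\widehat{G_1}$. The one subtle point deserving care is the passage between ``vanishes identically as a function'' and ``is the zero trigonometric polynomial''. This rests on linear independence of characters and is precisely the ingredient that converts the fibrewise SPD condition on $K_2(\gamma)$ into the vanishing of each coefficient $r_k(\gamma)$, thereby enabling the final appeal to SPD of $K_1$.
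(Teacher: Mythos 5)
Your proof is correct and takes essentially the same approach as the paper's: group the terms of $p$ by the second coordinate, use strict positive definiteness of each fibre $K_2(\gamma)$ to conclude the coefficient polynomials $r_k$ vanish on $K_1$, then invoke strict positive definiteness of $K_1$. The paper packages this through the quadratic form $\sum_{i,j}c_i\overline{c_j}f(z_j^{-1}z_i)$ for an explicit positive definite $f$ supported on $\widetilde K$, but at the level of trigonometric polynomials the underlying argument is the same as yours.
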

\begin{proof}
For $\gamma\in K_1$ and $\omega\in K_2(\omega)$ let positive real numbers 
$a_\gamma$ resp. $b_\omega$ be given such that $\underset{\gamma\in K_1}\sum 
a_\gamma\left (\underset{\omega\in K_2(\gamma)}\sum b_\omega\right)$ is 
convergent. 
If we put $f=\underset{\gamma\in K_1}\sum a_\gamma\gamma\left 
(\underset{\omega\in K_2(\gamma)}\sum b_\omega\omega\right)=\underset{(\gamma,\omega)\in 
\tilde K}\sum a_\gamma b_\omega\gamma\omega$, then $f$ converges absolutely and 
unconditionally on $G_1\times G_2$ by Fubini's theorem, see \cite[(21.13)]{HS}. 
Now suppose that for distinct $z_1=(x_1,y_1),\ldots,z_n=(x_n,y_n)$ in $G_1 
\times G_2$ and some complex $c_1,\ldots,c_n$ we have
\begin{equation}
0=\underset{i,j=1}{\overset{n}{\sum}}c_i\overline{c_j}f(z^{-1}_jz_i)=\underset{(\gamma,\omega)\in 
\tilde K}\sum a_\gamma b_\omega\left|\underset{i=1}{\overset{n}{\sum}}c_i\gamma(x_i)\omega(y_i)\right|^2. 
\end{equation}
Without loss we can assume that $y_1,\ldots,y_m$ are distinct and we put 
$I_l=\{k:y_k=y_l\}$. Thus $I_1,\ldots,I_m$ form a partition of $\{1,\ldots,n\}$ 
and  
(3.12) reads for $(\gamma,\omega)\in \tilde{K}$:
\be 0=\underset{i=1}{\overset{n}{\sum}}c_i\gamma(x_i)\omega(y_i)=\underset{l=1}{\overset{m}{\sum}}\left(\underset{k\in 
I_l}\sum c_k\gamma(x_k)\right)\omega(y_l)\ben
Since $K_2(\gamma)$ is strictly positive definite this implies $0=\underset{k\in 
I_l}\sum c_k\gamma(x_k)$ for $(\gamma,\omega)\in \tilde{K}$ and $l=1,\ldots,m$. 
But this again leads to $c_k=0$ for $k\in I_l$ and $l=1,\ldots,m$, since $K_1$ 
is strictly positive definite. 
\end{proof}
We do not know of an exhaustive characterisation of strictly positive definite subsets of the product group $\widehat{G}_1 \times \widehat{G}_2$ in terms of strictly positive definite subsets of $\widehat{G}_1$ and $\widehat{G}_2$. It is fairly easy to see that the sufficient condition of the theorem is not necessary: For a counterexample, consider the case $G_1 = G_2 = \mathbb{T}$. Let 
\[ K = \bigcup_{n=1}^\infty \{ n \} \times \{ -n,\ldots,n \} ~,\]
and let $K_1$ be defined as in the theorem. Then Lemma \ref{lem:finite_spd} implies that $K_1 = \emptyset$. But $K$ is strictly positive definite, which can be easily seen by applying the theorem with the roles of $\widehat{G}_1$ and $\widehat{G}_2$ interchanged, and using the observation that 
\[
 K = \bigcup_{m=-\infty}^\infty \{ k : k \ge |m| \} \times \{ m \}~.
\] One could formulate a version of the theorem that covers this example as well, e.g. by introducing a condition that is symmetric with respect to the roles of $\widehat{G}_1$ and $\widehat{G}_2$. More generally,  since strict positive definiteness is clearly preserved by the action of a group automorphism, the condition would have to be invariant under automorphisms of $\widehat{G}_1 \times \widehat{G}_2$ as well. The counterexample illustrates that the sufficient condition is not invariant under the automorphism $(\gamma,\omega) \mapsto (\omega,\gamma)$. It seems open to us whether a clean-cut characterisation working for all product groups is available. 
\subsection{Proof of Theorem \ref{thm:main}}
By Lemma \ref{lem:spd_ubiq} it remains to prove the ``if''-direction. Assume 
that $K \subset F \times \mathbb{Z}^r$ is ubiquitous, and define $K_2(\gamma)$, 
for arbitrary $\gamma \in F$, according to Theorem \ref{thm:spd_dirprod}. It 
suffices to show, for all $\gamma \in F$, that $K_2(\gamma) \subset 
\mathbb{Z}^r$ is strictly positive definite. 

 Suppose $\gamma\in F$, $H$ a subgroup of finite index in $\Z^r$ and $\omega\in 
\Z^r$. Then $\{e\}\times H$ is a subgroup of finite index in $ F\times\Z^r$ and 
by assumption the intersection  \be K\cap (\gamma,\omega)(\{e\}\times 
H)=K\cap\{\gamma\}\times \omega H=\{(\gamma,\chi):\chi\in K_2(\gamma)\cap \omega 
H\}\ben is not empty. Hence, the projection thereof to the second variable is 
also not empty and this is nothing but the intersection of $K_2(\gamma)$ with 
$\omega H$. That is, $K_2(\gamma)$ is ubiquitous, for arbitrary $\gamma\in F$. 
Now Theorem \ref{thm:ubiq_spd_torus} implies that $K_2(\gamma)$ is strictly 
positive definite, and we are done. \hfill $\Box$

\bibliographystyle{amsplain}

\end{document}